\newtheorem{cor}{Corollary}
\newtheorem{lemma}{Lemma}
\newtheorem{prop}{Proposition}
\newtheorem*{namedthm*}{\namedthmname}
\newenvironment{namedthm}[1]
  {\newcommand\namedthmname{#1}\begin{namedthm*}}
  {\end{namedthm*}}
\newcommand{\ov}[1]{\overline{#1}}
\begin{document}
 \author[Fumagalli]{Francesco Fumagalli}
    \address{Dipartimento di Matematica  e Informatica ``U.\,Dini" \\
   Universit\`a di Firenze\\
   Viale Morgagni 67A, \ I-50134 Firenze, Italy}
   
   \email{francesco.fumagalli\,@\,unifi.it}
   \author[Leinen]{Felix Leinen}
   \address{Institut f\"ur Mathematik \\
   Johannes Gutenberg-Universit\"at\\
   D$-$55099 \,Mainz, Germany}
   \email{Leinen\,@\,uni-mainz.de}
   
   \author[Puglisi]{Orazio Puglisi}
   \address{Dipartimento di Matematica  e Informatica ``U.\,Dini"\\
   Universit\`a di Firenze\\ 
   Viale Morgagni 67A, \ I-50134 Firenze, Italy}
   \email{orazio.puglisi\,@\,unifi.it}
   
\title{Bounding the Fitting height\\ in terms of the exponent}
\date{\today}
\dedicatory{In memory of our friend Carlo Casolo}
\begin{abstract}
Every finite solvable group $G$ has a normal series with nilpotent factors.
The smallest possible number of factors in such a series is called the Fitting height $h(G)$.
In the present paper, we derive an upper  bound for  $h(G)$ in terms of the exponent of $G$.
Our bound constitutes a considerable improvement of an earlier bound obtained in \cite{sha}.
\vskip1ex
\noindent
MSC2020$\colon$ 20D10, 20F14, 20F16.\\ 
Keywords$\colon$ Fitting height, exponent.
   
\end{abstract}
\maketitle


\pagestyle{myheadings}
\markright{{\large\sc an upper bound for the Fitting height}}
\markleft{{\large\sc fumagalli $\mid$ leinen $\mid$ puglisi}}

\section{Introduction}

Let $F(G)$ denote the Fitting subgroup of the group $G$. 
The \emph{Fitting series} in $G$ is defined via  
$$F_0(G)=1\text{ \ and}\quad F_i(G)/F_{i-1}(G)=F(G/F_{i-1}(G))\quad\text{for every }i\ge 1\,.$$
When  $G$ is finite, this series reaches $G$ if and only if $G$ is solvable. 
The number of 
nontrivial members of the Fitting series is  called the
\emph{Fitting height} $h(G)$ of $G$.

The Fitting height seems to have a strong influence on the structure of a finite solvable group.
Quite a few investigations have been devoted to finding relations involving the Fitting height
and other invariants.  The interested reader may get an idea of such kind of problems
by consulting the survey \cite{tur} and its bibliography.
 
In this short note we will focus on the interplay between the Fitting height and the exponent.
A.\,Shalev proved in \cite[Lemma 2.5]{sha} that
$$
h(G)< \prod_{i=1}^k(2e_i+1).
$$
whenever $G$ is a finite solvable group with exponent
$\exp(G)=p_1^{e_1}\ldots p_k^{e_k}$,
where the $p_i$ are pairwise distinct primes.
Let $\Omega(m)$ denote  the number of prime divisors of the natural number $m$, 
counted with multiplicities. 
Shalev's result implies the exponential bound
$h(G)< 3^{\Omega(\exp(G))}$.  

Let $\Omega_1(m)$ denote the number of odd prime divisors of $m$, which are Fermat primes 
(counted with multiplicities).
We shall improve Shalev's inequality to a linear bound as follows.

\begin{namedthm}{Theorem 1}\label{thm}
Consider a finite solvable group $G$. Then  
$$
h(G)~\leq~ \Omega(\exp(G))+ \Omega_1(\exp(G))~\leq~ 2\,\Omega(\exp(G)).
$$
\end{namedthm}

A slightly nicer bound holds for groups of odd order.

\begin{namedthm}{Theorem 2}\label{thm2}
Consider a finite group $G$ of odd order. Then  
$$
h(G)\leq \Omega(\exp(G)).
$$
\end{namedthm}

The bound given by Theorem 1 is tight, 
whenever the order of $G$ is not divisible by any Fermat prime. 
To see this, let $P_i$ be an elementary abelian $p_i$-group, for distinct non-Fermat primes
$p_1, \dots , p_n$.
The iterated wreath product 
$$((...(P_1\wr P_2)\wr \ldots\,)\wr P_{n-1})\wr P_n$$ 
of the groups $P_i$ in their regular representations has Fitting height $n$ and exponent
$m=p_1\cdots p_n$; therefore $n=\Omega(m)$. 
This kind of example also shows, that the bound given in Theorem 2 is tight. 
It remains uncertain, whether equality can hold in Theorem 1 in the presence of Fermat primes.

It is worth remarking, that other common invariants of finite solvable groups cannot be
bounded by a function of the exponent. For example, as pointed out in \cite[p.\,267]{vauzel}, 
the derived length of the largest finite $m$-generated group of prime power exponent $p^n\ge4$
is at least $\lfloor \log_2 m\rfloor$. 
Therefore, the derived length of a $p$-group cannot 
be bounded by a function of its exponent.


\section{The result}

For any group $G$, let 
$$
\rho_0(G)=G,\quad
\rho_1(G)=\bigcap_{i\in \mathbb{N}}\gamma_i(G) \quad \text{and} \quad
\rho_n(G)=\rho_1(\rho_{n-1}(G))\quad \text{for all }~n\ge2.
$$
It is clear that $G$ has Fitting height $n$ if and only if $n$ is 
minimal with respect to $\rho_n(G)=1$.

Let $G$ be a finite group. A \emph{tower} of height $n$ in $G$ is a family
$\{P_n,\ldots,P_1\}$ of non-trivial subgroups of $G$ satisfying 
\begin{enumerate}
\item[(1)~] 
every ${P}_i$ is a $p_i$-group for some prime $p_i$,
where $p_{i+1}\neq p_i$ \ for \\ $i=n-1,\ldots,1$,
\item[(2)~] 
${P}_i$ normalizes ${P}_j$ whenever $i<j$, \ and
\item[(3)~] 
$[\ov P_i,P_{i-1}]=\ov P_i$ \ for \ $i=n,\ldots, 2$, \ where we 
use the notation\\   
$\ov P_i={P}_i/C_i$ with $C_n=1$ and $C_i=C_{P_i}(\ov P_{i+1})$  for $i=n-1,\ldots,1$.
\end{enumerate}
Note, that property (3) implies the non-triviality of the groups $\ov P_i$ whenever $P_n\neq1$.
The above definition is a slightly weaker form of the concept of a tower as 
introduced in \cite{Turull}.
\begin{lemma}\label{towers_exist} 
Every non-trivial finite solvable group with Fitting height $n$ contains a tower of height $n$.
\end{lemma}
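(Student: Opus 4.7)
I would proceed by induction on $n = h(G)$. The base case $n = 1$ is immediate: $G$ is nilpotent, so any non-trivial Sylow subgroup is a tower of height one.

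For $n \geq 2$, put $F = F(G)$. Since $h(G/F) = n - 1$, induction applied to $G/F$ supplies a tower $\set{\wt P_{n-1}, \ldots, \wt P_1}$, with $\wt P_i$ a $p_i$-subgroup of $G/F$. My plan is to lift these to $p_i$-subgroups $P_i$ of $G$ and to adjoin a new top element $P_n \leq F$. For the lift: letting $H_i$ be the preimage of $\wt P_i$ in $G$ and writing $F = F_{p_i} \times F_{p_i'}$, Schur--Zassenhaus in $H_i$ (applied to the Hall normal subgroup $F_{p_i'}$) produces a $p_i$-subgroup $P_i \leq H_i$ with $P_i F/F = \wt P_i$ and $P_i \cap F = F_{p_i}$. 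Iterating a standard Schur--Zassenhaus / Frattini correction from $i = n - 1$ downwards, I can choose the $P_i$'s so that $P_j$ normalizes $P_i$ whenever $j < i \leq n - 1$.

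For the top element: solvability gives $C_G(F) \leq F$, so $\wt P_{n-1} \neq 1$ forces $P_{n-1}$ to act non-trivially on $F = \prod_q F_q$; some Sylow $F_{p_n}$ therefore satisfies $[F_{p_n}, P_{n-1}] \neq 1$, and by replacing the top prime of the inherited tower in $G/F$ if necessary we may arrange $p_n \neq p_{n-1}$. Set
\[
P_n = [F_{p_n}, P_{n-1}].
\]
Coprime action yields $[P_n, P_{n-1}] = P_n$ (condition (3) at level $n$), and the $G$-invariance of $F_{p_n}$ together with the fact that each $P_j$ ($j < n$) normalizes $P_{n-1}$ implies $P_j$ normalizes the commutator subgroup $P_n$, giving condition (2) at level $n$. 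To verify condition (3) at intermediate levels $2 \leq i \leq n - 1$, one shows by descending induction from $k = n - 1$ down to $k = i$ that $F_{p_k} \leq C_k = C_{P_k}(\ov P_{k+1})$: indeed $F_{p_{n-1}}$ centralizes $P_n \leq F_{p_n}$ because different Sylows of the nilpotent group $F$ commute, and in general the image of $F_{p_k}$ in $G/F$ is trivial, so $[F_{p_k}, P_{k+1}] \leq P_{k+1} \cap F = F_{p_{k+1}} \leq C_{k+1}$. Dedekind's modular law then yields $[P_i, P_{i-1}] \cdot F_{p_i} = P_i$, and absorbing $F_{p_i} \leq C_i$ gives $[\ov P_i, P_{i-1}] = \ov P_i$.

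The main obstacle is the coherent lifting step: simultaneously arranging the $P_j$-normalizing property for all $j < i$, accomplished via an iterated Schur--Zassenhaus / Frattini correction applied to the $F_{p_i'}$-conjugacy class of lifts of $\wt P_i$. A secondary subtlety is ensuring $p_n \neq p_{n-1}$ in the top-element construction, which may require a local swap of the top Sylow in the inherited tower of $G/F$ in order to select a different ``active'' prime for the action of $P_{n-1}$ on $F$.
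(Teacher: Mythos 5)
Your overall strategy (induct on $n$, pass to $G/F(G)$, lift the inherited tower, adjoin a new bottom-of-the-Fitting-series group as the new top) is the same skeleton as the paper's, but the crucial step fails as written. To build $P_n$ you need a prime $p_n\neq p_{n-1}$ with $[F_{p_n},P_{n-1}]\neq 1$, and all your argument gives (from $C_G(F)\le F$) is that $P_{n-1}$ acts non-trivially on \emph{some} Sylow subgroup of $F$ --- which may well be only $F_{p_{n-1}}$ (for instance $F(G)$ can be a $p_{n-1}$-group). Your remedy, ``replacing the top prime of the inherited tower in $G/F$ if necessary,'' is not a construction: the inductive hypothesis hands you \emph{some} tower of height $n-1$ in $G/F$ with no control whatsoever over its top prime, and there is no reason a tower of that height with a different top prime exists, nor any mechanism for a ``local swap'' that preserves conditions (1)--(3). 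This is exactly the difficulty the paper circumvents by proving a \emph{stronger} inductive statement: it constructs a weak tower satisfying the extra property ($\star$) that each $P_{n-i}F_i/F_i$ is a full Sylow subgroup of $F_{i+1}/F_i$. With ($\star$), if $P_{n-1}$ centralized every Sylow $q$-subgroup of $F_1$ for $q\neq p_{n-1}$, then $P_{n-1}F_1$ would be a nilpotent normal subgroup of $F_2$ (its image being a Sylow subgroup of the nilpotent group $F_2/F_1$), hence contained in $F_1$, contradicting $P_{n-1}F_1/F_1\neq 1$. Without ($\star$) or a substitute, your top-element step simply has no proof.

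Two further points. First, your verification of condition (3) at levels $i\le n-2$ rests on the identity $[P_i,P_{i-1}]\,F_{p_i}=P_i$, which would require $[\wt P_i,\wt P_{i-1}]=\wt P_i$ in $G/F$; but the tower in $G/F$ only gives $[\,\wt P_i/\wt C_i,\wt P_{i-1}]=\wt P_i/\wt C_i$ with $\wt C_i$ possibly non-trivial, so you must additionally show that the part of the preimage of $\wt C_i$ lying in $P_i$ is contained in $C_i=C_{P_i}(\ov P_{i+1})$ (this can be done by a downward recursion, but it is missing). The paper sidesteps this entirely by constructing only a \emph{weak} tower (condition (3') instead of (3)) and invoking Turull's Lemma~1.4 to upgrade it. Second, the coherent lifting of $\wt P_{n-1},\ldots,\wt P_1$ to subgroups of $G$ with the required mutual normalization --- which you yourself call the main obstacle and only sketch via ``iterated Schur--Zassenhaus/Frattini correction'' --- is precisely the content of Turull's Lemma~1.6, which the paper cites rather than reproves; as it stands this step is asserted, not established.
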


\begin{proof}
We shall call a family $\{P_n,\ldots,P_1\}$ of subgroups in $G$ a \emph{weak} 
tower of height $n$, if it satisfies properties (1) and (2) of a tower, and in addition 
\begin{enumerate}
\item[(3')~]
$\ov P_i\neq 1$ \,for all $i$.
\end{enumerate} 
By \cite[Lemma 1.4]{Turull}, it is sufficient to show the existence 
of a weak tower of height $n$ in $G$.

To this end, consider the Fitting series \ $1=F_0<F_1<\ldots<F_n=G$ \ in $G$. 
Here we have $F_n=G$,
because $G$ has Fitting height $n$. We shall proceed by induction over $n$ in order to produce
a weak tower $\{P_n,\ldots,P_1\}$ in $G$ such that 
\begin{itemize}
\item[($\star$)\quad]
$P_{n-i}F_{i}/F_i$ is a Sylow subgroup in 
$F_{i+1}/F_i$ for $0\le i\le n-1$.
\end{itemize}

When $n=1$, choose any non-trivial Sylow subgroup $P_1$ in $G$. 
Suppose then, that $n>1$. By induction, there exists a weak tower $T$ in $G/F_1$ satisfying 
($\star$).
From \cite[Lemma 1.6]{Turull}, this leads to a weak tower $\{P_{n-1},\ldots,P_1\}$ in
$G$ satisfying ($\star$), such that $T$ consists of the groups \,$P_iF_1/F_1$ \,$(n-1\ge i\ge 1)$.
For some prime $p_n\neq p_{n-1}$ the group $F_1$ contains a Sylow $p_n$-subgroup $P_n$
satisfying $[P_n,P_{n-1}]\neq 1$,
because otherwise 
the subgroup $P_{n-1}F_1$ would be a nilpotent normal subgroup in $F_2$
and thus be contained in $F_1$, a contradiction. 
Now  $\{P_n,\ldots,P_1\}$ is a weak tower in $G$.
\end{proof}

\begin{lemma}\label{quotient} 
Let $G$ be a non-trivial finite solvable group with Fitting height $n$.\linebreak
Suppose, that $G={P}_{n}\cdots{P}_{1}$ for some tower $\{P_n,\ldots,P_1\}$ in $G$ of height $n$. 
If $N\trianglelefteq G$ and $P_nN/N\neq 1$, then
$\{P_nN/N, \ldots, P_1N/N\}$ is a tower in $G/N$.
\end{lemma}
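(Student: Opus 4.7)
The plan is to verify in turn the three defining properties of a tower for the family $\{\widetilde{P}_i\}_{i=n,\ldots,1}$ in $G/N$, where I write $\widetilde{X}:=XN/N$. Properties (1) and (2) are essentially formal: each $\widetilde{P}_i$ is a quotient of $P_i$ and thus a $p_i$-group, with the same sequence of primes, so consecutive primes remain distinct; and $\widetilde{P}_i$ normalizes $\widetilde{P}_j$ for $i<j$ because $P_i$ normalizes $P_j$. The real work lies in establishing property (3) together with the non-triviality of every $\widetilde{P}_i$.

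Let $\pi\colon G\to G/N$ denote the canonical projection, and define the associated centralizers in $G/N$ by $\widetilde{C}_n:=1$ and $\widetilde{C}_i:=C_{\widetilde{P}_i}(\widetilde{P}_{i+1}/\widetilde{C}_{i+1})$ for $i<n$. The crucial step is to establish the inclusion
\[
\pi(C_i)\;\subseteq\;\widetilde{C}_i\qquad\text{for }i=n,n-1,\ldots,1,
\]
which I would prove by downward induction on $i$. The base case $i=n$ is immediate since $C_n=1$. For the inductive step, assume $\pi(C_{i+1})\subseteq\widetilde{C}_{i+1}$; then the projection $P_{i+1}\twoheadrightarrow\widetilde{P}_{i+1}$ factors through a surjection $\overline{P}_{i+1}\twoheadrightarrow\widetilde{P}_{i+1}/\widetilde{C}_{i+1}$. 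For $x\in C_i$ one has $[x,P_{i+1}]\subseteq C_{i+1}$, and applying $\pi$ yields $[\pi(x),\widetilde{P}_{i+1}]\subseteq\pi(C_{i+1})\subseteq\widetilde{C}_{i+1}$, which says precisely $\pi(x)\in\widetilde{C}_i$.

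Given this inclusion, property (3) drops out at once: the original identity $[\overline{P}_i,P_{i-1}]=\overline{P}_i$ is the same as $[P_i,P_{i-1}]\,C_i=P_i$, and projecting to $G/N$ produces $[\widetilde{P}_i,\widetilde{P}_{i-1}]\,\pi(C_i)=\widetilde{P}_i$, whence $[\widetilde{P}_i,\widetilde{P}_{i-1}]\,\widetilde{C}_i=\widetilde{P}_i$, i.e.\ $[\widetilde{P}_i/\widetilde{C}_i,\widetilde{P}_{i-1}]=\widetilde{P}_i/\widetilde{C}_i$. Non-triviality of each $\widetilde{P}_i$ is then recovered by a second downward induction: $\widetilde{P}_n/\widetilde{C}_n=\widetilde{P}_n\neq1$ by hypothesis, and once $\widetilde{P}_i/\widetilde{C}_i\neq1$ is known, property (3) forces $\widetilde{P}_{i-1}\not\subseteq\widetilde{C}_{i-1}$, so $\widetilde{P}_{i-1}/\widetilde{C}_{i-1}\neq1$ and, a fortiori, $\widetilde{P}_{i-1}\neq1$.

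The main obstacle is verifying the inclusion $\pi(C_i)\subseteq\widetilde{C}_i$; once it is in hand, both property (3) and the non-triviality of each $\widetilde{P}_i$ follow by routine projection of identities that already hold in $G$.
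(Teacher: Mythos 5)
Your proof is correct and follows essentially the same route as the paper's: both hinge on showing, by downward induction, that the image of each centralizer $C_i$ lands inside the corresponding centralizer $\widetilde{C}_i$ (the paper's $C_{P_i}(\ov P_{i+1})N/N\leq D_i$), and then property (3) and the non-triviality of the $\widetilde{P}_i$ follow by projecting the identities from $G$. The only difference is cosmetic: you verify the inductive step by a direct commutator computation $[x,P_{i+1}]\subseteq C_{i+1}$, whereas the paper phrases it as $\ov Q_i$ being $P_{i-1}$-isomorphic to a quotient of $\ov P_i$; both amount to the same thing.
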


\begin{proof}
Let $Q_i=P_iN/N$ and $\ov Q_i=Q_i/D_i$ for all $i$, where $D_n=1$ and 
$D_i=C_{Q_i}(\ov Q_{i+1})$ for $i=n-1,\ldots,1$.
Obviously, the family $\{Q_n,\ldots,Q_1\}$ of subgroups of $G/N$ inherits 
properties (1) and (2) of a tower from the given tower in $G$. 
Since $Q_n\neq 1$ by hypothesis, the non-triviality of the groups $Q_i$ will be a consequence of 
property (3). Therefore it just remains to establish (3) for the family $\{Q_n,\ldots,Q_1\}$.

To this end, we will show first, that $C_{P_i}(\ov P_{i+1})N/N\leq D_i$ for $i=n-1,\ldots,1$.
This inclusion obviously holds for $i=n-1$.
We proceed by recursion and assume, that there exists $k\in\{n-1,\ldots,2\}$
such that the inclusion has already been shown for $i=n-1,\ldots,k$.
Then $\ov Q_k= Q_k/D_k$ is a homomorphic image of $P_kN/C_{P_k}(\ov P_{k+1})N$, hence of 
$\ov P_k=P_k/C_{P_k}(\ov P_{k+1})$. 
Since the involved homomorphisms are projections, and since all the involved groups are
normalized by $P_{k-1}$, we have that $\ov Q_k$ is $P_{k-1}$-isomorphic to a quotient of 
$\ov P_k$. 
Therefore, $C_{P_{k-1}}(\ov P_k)$ acts trivially on $\ov Q_k$, and 
we obtain $C_{P_{k-1}}(\ov P_k)N/N\leq D_{k-1}$.
This completes the recursion.

Now, for each $i\in\{n-1,\ldots,1\}$, the quotient  
$\ov Q_i$ is $P_{i-1}$-isomorphic to an image of $\ov P_i$.
Therefore property (3) of the given tower in $G$ implies that   
$[\ov Q_i, P_{i-1}] = \ov Q_i$, and it follows that $[\ov Q_i, Q_{i-1}] = \ov Q_i$
for each $i\in\{n-1,\ldots,1\}$.
\end{proof}

\begin{lemma}\label{contains}
Let $\{P_n,\ldots,P_1\}$ be a tower in a finite solvable group.
If $G={P}_n\cdots{P}_1$, then $\rho_{n-1}(G)={P}_n$.
\end{lemma}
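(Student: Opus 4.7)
I would prove $\rho_{n-1}(G) = P_n$ by establishing the two inclusions separately, obtaining the nontrivial one by induction on $n$. For the easy inclusion $\rho_{n-1}(G) \leq P_n$, condition (2) of the tower makes each product $P_n P_{n-1}\cdots P_i$ normal in $G$, so the quotients
\[
1 \leq P_n P_{n-1}/P_n \leq P_n P_{n-1} P_{n-2}/P_n \leq \cdots \leq G/P_n
\]
constitute a normal series in $G/P_n$ of length $n-1$ with $p$-group (hence nilpotent) factors. Consequently $h(G/P_n) \leq n-1$, and since $\rho_{n-1}(G)$ is the smallest normal subgroup of $G$ whose quotient has Fitting height at most $n-1$, we deduce $\rho_{n-1}(G) \leq P_n$.

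For the reverse inclusion $P_n \leq \rho_{n-1}(G)$ I induct on $n$; the case $n = 1$ is immediate. For $n \geq 2$, the plan is to build a tower of height $n-1$ inside $\rho_1(G)$ whose top is $P_n$, then invoke the inductive hypothesis. Define $R_i := [P_{i+1}, P_i]$ for $i = 1, \ldots, n-1$. Since $p_{i+1} \neq p_i$, elements of coprime orders commute in the nilpotent quotient $G/\rho_1(G)$, so each $R_i \leq \rho_1(G)$. Condition (3) applied at the top of the original tower gives $R_{n-1} = [P_n, P_{n-1}] = P_n$, because $C_n = 1$. Set $H := R_{n-1} R_{n-2} \cdots R_1 \leq \rho_1(G)$.

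The core claim is that $\{R_{n-1}, \ldots, R_1\}$ is a tower of height $n-1$ in $H$. Conditions (1) and (2) transfer routinely from the original tower via $R_i \leq P_{i+1}$. The main obstacle is condition (3), which I would verify by downward induction on $i$, simultaneously proving that the new-tower centralizer $D_i := C_{R_i}(\ov R_{i+1})$ equals $R_i \cap C_{i+1}$; together with the tower identity $R_i C_{i+1} = P_{i+1}$ (a rewording of $[\ov P_{i+1}, P_i] = \ov P_{i+1}$), this produces a canonical isomorphism $\ov R_i \cong \ov P_{i+1}$ compatible with conjugation. Since also $R_{i-1} C_i = P_i$ and $C_i$ centralizes $\ov P_{i+1}$, the action of $R_{i-1} \leq P_i$ on $\ov R_i$ coincides with that of $P_i$ on $\ov P_{i+1}$, yielding $[\ov R_i, R_{i-1}] = [\ov P_{i+1}, P_i] = \ov P_{i+1} = \ov R_i$ as required.

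Granted this tower structure on $H$, the inductive hypothesis applied to $H$ gives $\rho_{n-2}(H) = R_{n-1} = P_n$. Using the monotonicity of $\rho_k$ under subgroup inclusion (itself following from the observation that $K \leq G$ implies $K/(K \cap \rho_1(G)) \hookrightarrow G/\rho_1(G)$ is nilpotent, so $\rho_1(K) \leq \rho_1(G)$, iterated), we finally obtain $P_n = \rho_{n-2}(H) \leq \rho_{n-2}(\rho_1(G)) = \rho_{n-1}(G)$, completing the induction.
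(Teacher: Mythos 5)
Your proof is correct, but it takes a genuinely different route from the paper's. The easy containment $\rho_{n-1}(G)\le P_n$ is handled the same way in both (the normal subgroups $P_n\cdots P_i$ give a normal series with nilpotent factors). For the reverse containment the paper also inducts on $n$, but it does so by passing to a quotient: with $C=C_{P_{n-1}}(P_n)$ and $X=P_{n-1}\cdots P_1$, Lemma \ref{quotient} makes the images of $P_{n-1},\ldots,P_1$ a tower of height $n-1$ in $X/C$, the inductive hypothesis yields $P_{n-1}\le\rho_{n-2}(G)C$, and then an iterated commutator recursion pushes $P_n$ down the chain $\rho_1(G)\ge\rho_2(G)\ge\cdots$ one step at a time. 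You instead descend into the nilpotent residual directly: the commutators $R_i=[P_{i+1},P_i]$ lie in $\rho_1(G)$ since $G/\rho_1(G)$ is nilpotent, they form a tower of height $n-1$ with top $R_{n-1}=P_n$, and induction applied to $H=R_{n-1}\cdots R_1$, together with monotonicity of $\rho_k$ under subgroups and the identity $\rho_{n-2}(\rho_1(G))=\rho_{n-1}(G)$, finishes the argument. Your centralizer bookkeeping ($D_i=R_i\cap C_{i+1}$, giving a $P_i$-equivariant isomorphism $\ov R_i\cong\ov P_{i+1}$, and then $[\ov R_i,R_{i-1}]=[\ov R_i,P_i]=\ov R_i$ because $P_i=R_{i-1}C_i$ with $C_i$ acting trivially) is sound and plays for subtowers exactly the role that Lemma \ref{quotient} plays for quotient towers in the paper; so your argument is self-contained where the paper reuses an already-proved lemma, at the cost of redoing that bookkeeping, and in exchange it avoids the iterated commutator computation entirely. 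The points you pass over lightly --- that $\rho_{n-1}(G)$ is the smallest normal subgroup whose quotient has Fitting height at most $n-1$, that each $R_i\neq1$ (immediate from $R_iC_{i+1}=P_{i+1}$ and $\ov P_{i+1}\neq1$), and that $P_i$ normalizes $C_{i+1}$ so the canonical isomorphism really is $P_i$-equivariant --- are standard and easily supplied.
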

\begin{proof}
The claim is true when $n<2$. So we suppose now, that $n\ge 2$ and argue by induction. 
Again, we consider the centralizer $C=C_{n-1}$. Let $X=P_{n-1}\cdots P_1$.
By Lemma \ref{quotient},
the images in  $G/C$  of the groups $P_n,\ldots,P_1$ form a tower of height $n$ in $G/C$.
Since $C_{P_{n-1}/C}(P_nC/C)=C_{n-1}/C=1$, the images in $G/C$ 
of the groups $P_{n-1},\ldots,P_1$
form a tower of height $n-1$, 
and $X/C$ is the product of these images. Therefore our inductive hypothesis yields 
${P}_{n-1}=\rho_{n-2}(X)C\le\rho_{n-2}(G)C$.  

We can use this relation in order to prove, that ${P}_n\leq \rho_i(G)$ for all $i<n$:
Beginning with ${P}_n=[{P}_n,{P}_{n-1}]\leq \gamma_2(G)$, a recursion shows that 
$$
P_n=[{P}_n,{P}_{n-1}]\le[\gamma_j(G),G]=\gamma_{j+1}(G)\quad\text{for all }j.
$$
It follows that  ${P}_n\leq \rho_1(G)$.

Arguing by induction, we suppose next, that ${P}_n\leq \rho_i(G)$ 
for some $i\leq n-2$. 
Then
\begin{eqnarray*}
P_n&=&[P_n,P_{n-1}]~\le~[P_n,\rho_{n-2}(G)C]\\
&=&[P_n,\rho_{n-2}(G)]
~\le~[\rho_i(G),\rho_{n-2}(G)]~\le~ \gamma_2(\rho_i(G))\,,
\end{eqnarray*}
and it follows as before, that $P_n\le\gamma_{j+1}(\rho_i(G))$ for all $j$. 
In particular ${P}_n\leq \rho_{i+1}(G)$. 
In the end, we obtain ${P}_n\leq \rho_{n-1}(G)$.

On the other hand, the subgroups $N_i={P}_n\cdots{P}_i$ are normal in $G$ 
and every $P_i$ is nilpotent. 
Therefore, $\rho_i(G)\leq{P}_n\cdots{P}_i$ for all $i$.
In particular, $\rho_{n-1}(G)\leq{P}_n$. 
\end{proof}

\begin{cor}\label{tower height}
If the finite solvable group $G$ is the product of subgroups, which form a tower of height $n$,
then $h(G)=n$.
\end{cor}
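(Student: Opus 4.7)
The plan is to read off the equality $h(G)=n$ directly from Lemma~\ref{contains} plus the observation that $P_n$ is nilpotent. Recall from the start of Section~2 that $h(G)$ is the smallest $n$ with $\rho_n(G)=1$, where $\rho_0(G)=G$, $\rho_1(H)=\bigcap_{i\in\mathbb{N}}\gamma_i(H)$, and $\rho_k(G)=\rho_1(\rho_{k-1}(G))$. So I need to show both $\rho_{n-1}(G)\neq 1$ and $\rho_n(G)=1$.

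For the lower bound, I would invoke Lemma~\ref{contains} to get $\rho_{n-1}(G)=P_n$. Since $\{P_n,\ldots,P_1\}$ is a tower, $P_n$ is non-trivial by definition, so $\rho_{n-1}(G)\neq 1$ and therefore $h(G)\geq n$.

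For the upper bound, I would apply $\rho_1$ to both sides of $\rho_{n-1}(G)=P_n$ to get
$$\rho_n(G)~=~\rho_1(\rho_{n-1}(G))~=~\rho_1(P_n)~=~\bigcap_{i\in\mathbb{N}}\gamma_i(P_n).$$
Since $P_n$ is a $p_n$-group, it is nilpotent, so the lower central series of $P_n$ reaches the trivial subgroup in finitely many steps. Hence $\rho_1(P_n)=1$, which gives $\rho_n(G)=1$ and therefore $h(G)\leq n$. Combining the two inequalities yields $h(G)=n$.

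There is no real obstacle here: all the work has been done in Lemma~\ref{contains}, and the corollary is essentially a one-line consequence of that lemma together with the nilpotence of the $p$-group $P_n$.
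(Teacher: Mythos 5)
Your proof is correct and is exactly the argument the paper intends (the corollary is stated without proof as an immediate consequence of Lemma~\ref{contains}): $\rho_{n-1}(G)=P_n\neq 1$ gives $h(G)\ge n$, and $\rho_n(G)=\rho_1(P_n)=1$ by nilpotence of the $p_n$-group $P_n$ gives $h(G)\le n$.
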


\begin{cor}\label{N_not_in_P_n}
Let $G$ be a finite solvable group with a tower
$\{P_n,\ldots,P_1\}$ such that $G={P}_n\cdots{P}_1$. 
If a normal subgroup $N$ of $G$ does not contain ${P}_n$, 
then $h(G/N)=h(G)=n$.
\end{cor}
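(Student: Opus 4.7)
The plan is to obtain $h(G/N)=n$ directly from the machinery already assembled, namely Lemma \ref{quotient} together with Corollary \ref{tower height}. The equality $h(G)=n$ is immediate, since $G$ itself is the product of the subgroups in the given tower of height $n$, so Corollary \ref{tower height} applies.

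For the quotient, the first step is to verify the hypotheses of Lemma \ref{quotient}. We have a tower $\{P_n,\ldots,P_1\}$ in $G$ with $G=P_n\cdots P_1$, and the assumption $P_n\not\leq N$ translates exactly to $P_nN/N\neq 1$. Lemma \ref{quotient} then guarantees that $\{P_nN/N,\ldots,P_1N/N\}$ is a tower of height $n$ in $G/N$.

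The second step is to observe that $G/N$ is the product of these images, because $G=P_n\cdots P_1$ forces $G/N=(P_nN/N)\cdots(P_1N/N)$. Since $G/N$ is a finite solvable group presented as the product of the members of a tower of height $n$, Corollary \ref{tower height} yields $h(G/N)=n$, which matches $h(G)=n$.

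There is essentially no obstacle here; the corollary is a straightforward packaging of the previous two results. The only thing that requires the hypothesis $P_n\not\leq N$ is to make the top layer of the tower survive in the quotient, which is precisely the condition needed to invoke Lemma \ref{quotient}.
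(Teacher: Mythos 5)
Your proof is correct, but it takes a different route from the paper's. You transfer the entire tower to the quotient via Lemma \ref{quotient} (the hypothesis $P_n\not\leq N$ giving exactly $P_nN/N\neq 1$), note that $G/N=(P_nN/N)\cdots(P_1N/N)$, and then apply Corollary \ref{tower height} to $G/N$; every step checks out, and $h(G)=n$ is indeed immediate from Corollary \ref{tower height} applied to $G$ itself. The paper instead argues by contraposition using Lemma \ref{contains} rather than Lemma \ref{quotient}: since $h(G/N)\leq h(G)=n$, if $h(G/N)<n$ then $\rho_{n-1}(G/N)=1$; but the image of $\rho_{n-1}(G)$ in $G/N$ lies in $\rho_{n-1}(G/N)$, and $\rho_{n-1}(G)=P_n$ by Lemma \ref{contains}, so $P_n\leq N$, contradicting the hypothesis. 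The paper's argument is a one-liner that exploits the explicit identification of $P_n$ as $\rho_{n-1}(G)$ together with the good behaviour of the lower central/$\rho$-series under quotients, while yours is a forward argument that has the small added benefit of exhibiting an explicit tower of height $n$ in $G/N$ (a fact the paper anyway extracts separately from Lemma \ref{quotient} where it needs it, e.g.\ in the proof of Proposition \ref{multiplicity}). Both proofs rest only on results already established before the corollary, so there is no gap in your version.
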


\begin{proof}
If $h(G/N)<n$, then $\rho_{n-1}(G/N)=1$ and $P_n=\rho_{n-1}(G)\leq N$.
\end{proof}

When
$\mathcal{T}=\{{P}_n, \dots , {P}_1$\} is a tower in a finite solvable group $G$,
we define 
$$
m_p(\mathcal{T})~=~\big|\{i~\,|~\,\text{$P_i$ is a $p$-group}\}\big|\quad
\text{for each prime $p$.}
$$
Recall, that the $p$-length $\ell_p(G)$ of a finite solvable group $G$ is the number of
$p$-factors in a shortest normal series in $G$, whose factors are $p$-groups or $p'$-groups.

\begin{prop}\label{multiplicity}
Let the finite solvable group $G$ be the product of 
the subgroups in a tower $\mathcal{T}$. 
Then $\ell_p(G)=m_p(\mathcal{T})$ for all primes $p$.
\end{prop}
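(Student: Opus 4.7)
The plan is first to establish the upper bound $\ell_p(G) \leq m_p(\mathcal{T})$ by exhibiting the normal series given by $V_i := P_n P_{n-1} \cdots P_i$ for $i = 1, \ldots, n+1$ (with $V_{n+1} := 1$). Condition (2) of the tower makes each $V_i$ into a subgroup of $G$, and $V_i \trianglelefteq G$ because every $P_j$ with $j<i$ normalizes each factor $P_l$ (for $l\ge i$) of $V_i$ by (2), while every $P_j$ with $j\ge i$ sits inside $V_i$. The quotient $V_i/V_{i+1}$ is an image of $P_i$ and hence a $p_i$-group; condition (1) gives $p_i\neq p_{i+1}$, so every $p$-factor of this series is flanked by $p'$-factors. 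Counting the $p$-factors yields exactly $m_p(\mathcal{T})$, and therefore $\ell_p(G) \leq m_p(\mathcal{T})$.

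For the reverse inequality I would induct on the tower height $n$; the base $n \leq 1$ is immediate. Set $G' := P_n \cdots P_2$. Then $G' \trianglelefteq G$ by the same normalization argument, and $\{P_n,\ldots,P_2\}$ inherits the three tower conditions with the same centralizers $C_i$ for $i\ge 2$, so it is a tower of height $n-1$ in $G'$; by the induction hypothesis, $\ell_p(G') = m_p(\{P_n,\ldots,P_2\})$. When $p_1 \neq p$, the group $G/G'$ is a $p'$-group (as a quotient of $P_1$) and $m_p(\{P_n,\ldots,P_2\}) = m_p(\mathcal{T})$; combining the elementary bounds $\ell_p(G') \leq \ell_p(G) \leq \ell_p(G') + \ell_p(G/G') = \ell_p(G')$ closes this case.

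The substantive case is $p_1 = p$: here $G/G'$ is a $p$-group, $m_p(\{P_n,\ldots,P_2\}) = m_p(\mathcal{T})-1$, and subadditivity yields $\ell_p(G) \leq m_p(\mathcal{T})$, so it remains to rule out $\ell_p(G) = \ell_p(G')$. The plan is to prove by a secondary induction on tower height the following auxiliary claim: for every tower-product $H = Q_r \cdots Q_1$, the top step of the upper $p$-series of $H$ has the same type ($p$ or $p'$) as the bottom-index subgroup $Q_1$. The ingredients are the compatibility $L_i(H) \cap H' = L_i(H')$ under the normal subgroup $H' = Q_r \cdots Q_2$, which follows from the characteristic property of the $L_i$ together with the defining $O_p$- and $O_{p'}$-functors, and the alternation $q_1 \neq q_2$ from condition (1), which excludes the obstructing configuration where both tops would be $p$. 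Applied to $G'$ (whose bottom-index subgroup $P_2$ is a $p'$-group since $p_2 \neq p_1 = p$) the claim places a $p'$-layer at the top of $G'$'s upper $p$-series. Returning to $G$: once the upper $p$-series reaches $G'$, the ensuing $p$-group quotient $G/G'$ forces one more $O_p$-step, contributing a brand-new $p$-layer, so $\ell_p(G) = \ell_p(G') + 1 = m_p(\mathcal{T})$.

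The hardest part will be the secondary induction itself: one must split into four subcases according to the types of $q_1$ and $q_2$ (one excluded by (1)) and carefully verify whether the $H/H'$ step merges with, extends, or creates a fresh top layer of $H'$. The bookkeeping is short but delicate, and it is precisely at this step that the alternation condition (1) of a tower becomes essential — without it, a $p$-quotient on top could be absorbed into an existing $p$-layer and equality in Proposition would fail.
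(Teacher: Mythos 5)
Your first half is fine: the series $1=V_{n+1}\trianglelefteq V_n\trianglelefteq\cdots\trianglelefteq V_1=G$ with $V_i=P_n\cdots P_i$ is normal by condition (2), each factor is a quotient of $P_i$, and counting $p$-factors gives $\ell_p(G)\le m_p(\mathcal{T})$; the case $p_1\ne p$ of your induction is also correct. The gap is that the entire content of the proposition now sits inside your unproved ``auxiliary claim'', and the ingredients you list for it cannot prove it. Any argument using only the compatibility $T_i(H)\cap H'=T_i(H')$ of the upper $p$-series with the normal subgroup $H'=Q_r\cdots Q_2$, plus the alternation $q_1\ne q_2$ from condition (1), would apply verbatim to a family satisfying (1) and (2) but not (3); but for such families the claim is false. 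For instance, take $H=Q_2\times Q_1$ with $Q_2$ a nontrivial $p$-group and $Q_1$ a nontrivial $q$-group, $q\ne p$: here (1) and (2) hold, $[Q_2,Q_1]=1$ violates (3), and the upper $p$-series is $1\le Q_1=O_{p'}(H)\le H$, so its top factor is a $p$-factor rather than the $p'$-factor your claim predicts. So condition (3), $[\overline{P}_i,P_{i-1}]=\overline{P}_i$, is indispensable, yet your sketch never invokes it -- you even attribute the essential role to condition (1). This is exactly where the difficulty lives: one must show that the $p$-group $P_1$, acting on the $p'$-group $\overline{P}_2$ with $[\overline{P}_2,P_1]=\overline{P}_2$, cannot be absorbed into an earlier $p$-layer of $G'$; ruling this out requires an argument of the type ``if the image of $P_1$ lay inside the relevant $O_{p',\dots,p}$-term, it would centralize the appropriate image of $P_2$, contradicting (3)'', and that argument (iterated along the tower) is the heart of the proof, not bookkeeping.

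Two further remarks. First, your closing deduction ``once the upper $p$-series reaches $G'$, the $p$-quotient $G/G'$ forces one more $O_p$-step'' is loose as stated, since the upper $p$-series of $G$ need not pass through $G'$; this part, however, is repairable: applying your auxiliary claim to \emph{both} $G$ (top factor of $p$-type, since $P_1$ is a $p$-group) and $G'$ (top factor of $p'$-type), together with $T_i(G)\cap G'=T_i(G')$, does force $\ell_p(G)\ge\ell_p(G')+1$. Second, for comparison: the paper argues quite differently, by a double induction on $h(G)$ and $|G|$, reducing to the case where $P_n$ is a minimal normal subgroup, writing $G=V\rtimes X$ with $V=P_n$ and $X=P_{n-1}\cdots P_1$, and analysing $O_p(G)=V\,O_p(X)$ and the normal subgroup $Y=O_p(X)$; condition (3) enters there through Lemma \ref{quotient} and Corollary \ref{N_not_in_P_n}, which rest on Lemma \ref{contains}. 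Your bottom-up peeling of $P_1$ is a legitimately different organization of the induction, but until the auxiliary claim is actually proved -- by an argument that genuinely uses (3) -- the proposal is missing its core step.
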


\begin{proof}
We shall proceed by induction on the Fitting height $n$ of $G$. 
Clearly, the claim holds for $n\le2$. 
So we assume now, that $n>2$ and that the claim is true for all groups of 
Fitting height $\le n-1$. 
Amongst the groups of Fitting height $\le n$, we proceed by induction over $|G|$.
We may thus assume, that $h(G)=n$ and that the claim holds for all groups $H$ of 
Fitting height $\le n$ satisfying $|H|<|G|$.
By hypothesis, $G$ is the product of subgroups 
${P}_n, \dots , {P}_1$ forming a tower $\mathcal T$.

Suppose, that there exists a non-trivial normal subgroup $N$ in $G$, 
which is properly contained in $P_n$. The factor group $G/N$ has Fitting height
$n$, and Lemma \ref{quotient} ensures that the subgroups $P_iN/N$ $(n\ge i\ge1)$ form a 
tower $\mathcal S$ in $G/N$. 
By minimality of $G$, we have $\ell_p(G/N)=m_p(\mathcal S)=m_p(\mathcal T)$
for all primes $p$. Now $O_{p_n}(G/N)=O_{p_n}(G)/N\ge P_n/N\neq 1$ implies 
$\ell_{p_n}(G)=\ell_{p_n}(G/N)=m_{p_n}(\mathcal T)$. Moreover, 
$\ell_{p}(G)=\ell_{p}(G/N)=m_{p}(\mathcal T)$ holds for all primes $p\neq p_n$ because 
$N$ is a $p_n$-group. 
We have thus shown, that it remains to treat the case, when 
$P_n$ is a minimal normal subgroup in $G$.

Let $V=P_n$ and $X={P}_{n-1}\cdots {P}_1$. Since $G=VX$ and since $V$ is abelian, 
we have $V\cap X\trianglelefteq G$. Hence $V\cap X=1$ or $V\cap X=V$.
However, the latter case cannot occur, because $V\le X$ and 
$P_{n-1}\trianglelefteq X$ would imply $V=[V,P_{n-1}]\le V\cap P_{n-1}=1$.
We have thus shown, that $G$ is the semidirect product of $V$ and $X$.

Consider the centralizer $C=C_{{P}_{n-1}}(V)$. Note that $C<P_{n-1}$ because of 
property (3) of the tower $\mathcal T$.
By Lemma \ref{quotient}, the subgroups $P_iC/C$ $(n-1\ge i\ge1)$ form a 
tower $\mathcal R$ in $X/C$. By minimality of $G$, we obtain
$m_p(\mathcal R)=\ell_p(X/C)$ for all primes $p$.
Since $C< P_{n-1}$, we also have $\ell_p(X/C)=\ell_p(X)$ for all primes $p$.
It follows, that $m_p(\mathcal T)=m_p(\mathcal R)=\ell_p(X)$ for all primes $p\neq p_n$.
It remains to treat the prime $p=p_n$.

Consider the subgroup $O_p(G)$ of $G$. In the case when $O_p(G)=V$, we have
$m_p(\mathcal T)=m_p(\mathcal R)+1=\ell_p(X/C)+1=\ell_p(X)+1=\ell_p(G)$.
Therefore, it remains to treat the case when $V<O_p(G)$.
Note that $VO_p(X)$ is a normal p-subgroup in $G$. 
Therefore $O_p(G)=VX\cap O_p(G)=V(X\cap O_p(G))\le VO_p(X)\le O_p(G)$, so that
equality holds. The non-trivial normal subgroup $Z(O_p(G))\cap V$ of $G$ must
coincide with the minimal normal subgroup $V$ of $G$. Thus, $Y=O_p(X)$ is
centralized by $V$, hence $Y\trianglelefteq G=VX$.

By Corollary \ref{N_not_in_P_n}, the group $G/Y=(P_nY/Y)\cdots(P_1Y/Y)$ 
has Fitting height $n$. It follows, that  $P_iY/Y\neq1$ for $i=n,\ldots,1$.
And Lemma \ref{quotient} ensures, that the subgroups $P_iY/Y$ form a tower $\mathcal U$ in $G/Y$.
By minimal choice of $G$, we have $m_p(\mathcal T)=m_p(\mathcal U)=\ell_p(G/Y)=\ell_p(G)$.
The proof of Proposition \ref{multiplicity} is complete.
\end{proof}

\begin{prop}\label{lengh=exp}
Let $G$ be a finite solvable group.
For each prime $p$, let  $p^{e_p}$ be the exponent of the Sylow $p$-subgroups of $G$
and let $\ell_p$ denote the $p$-length of $G$. 
Then we have 
\begin{enumerate}
\item~ $\ell_p\leq 2e_p$, whenever $p$ is an odd Fermat prime,
\item~ $\ell_p\leq e_p$, \ whenever $p=2$ or $p$ is odd and not a Fermat prime.
\end{enumerate}
\end{prop}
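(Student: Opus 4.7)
\smallskip\noindent
The statement is essentially the Hall--Higman bound on $p$-length, together with its refinement at the prime $2$, so my plan is to reduce to the situation in which Hall--Higman applies and then invoke the sharp form of that theorem.

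First, I would reduce to the case $O_{p'}(G)=1$. Both $\ell_p(G)$ and the exponent of a Sylow $p$-subgroup are preserved by passage to $G/O_{p'}(G)$, and after this reduction one has $C_G(O_p(G))\le O_p(G)$, so that every chief factor of $G$ above $O_p(G)$ affords a faithful coprime module action by the $p$-elements above it. I would then quote Hall--Higman's Theorem~B: if $G$ is $p$-solvable with $p$ odd and a Sylow $p$-subgroup of exponent $p^{e_p}$, then $\ell_p(G)\le e_p$ when $p$ is not a Fermat prime, and $\ell_p(G)\le 2e_p$ when $p$ is a Fermat prime. This simultaneously gives part~(1) and the odd-prime half of part~(2). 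The heart of the argument is the Hall--Higman minimal polynomial lemma: an element $x$ of order $p^{e_p}$ acting on a faithful coprime module satisfies $(x-1)^{p^{e_p}}=0$ in the enveloping algebra, and its minimal polynomial has degree at least $p^{e_p}$ unless an obstructing extraspecial $r$-subgroup intervenes, which can occur only when $p-1$ is a power of $2$, i.e.\ when $p$ is a Fermat prime.

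For $p=2$ the bound $\ell_2(G)\le e_2$ goes beyond the original Hall--Higman paper, which yields only something of the shape $\ell_2(G)\le 1+2e_2$. The sharp linear bound $\ell_2(G)\le e_2$ is due to Bryukhanova (building on partial results of Berger and Gross), and I would invoke it as a black box. This is the main obstacle to a self-contained proof: the Bryukhanova refinement rests on a delicate analysis of $2$-modular representations to rule out the dihedral and semidihedral obstructions that prevent the naive Hall--Higman argument from reaching the optimal bound in characteristic $2$. Without this improvement, the conclusion of Theorem~1 would have to absorb an extra linear factor at the prime $2$, and the clean form $h(G)\le \Omega(\exp(G))+\Omega_1(\exp(G))$ would be lost; so the efficient route is to take Hall--Higman and Bryukhanova as given and read off both inequalities directly.
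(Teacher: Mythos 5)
Your proposal is correct and follows essentially the same route as the paper: both parts are obtained by citing the Hall--Higman bounds for odd primes (the paper quotes their Theorem~A, which packages the consequence of the minimal-polynomial Theorem~B you sketch) and the Bryukhanova refinement $\ell_2\le e_2$ for $p=2$. Your extra remarks on the reduction to $O_{p'}(G)=1$ and the extraspecial obstruction are accurate background but play no independent role beyond the black-box citations, which is exactly how the paper proceeds.
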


\begin{proof}
By \cite[Theorem A]{hh} we have $e_p\geq \lfloor (l_p+1)/2\rfloor$, whenever $p$ is an 
odd Fermat prime. It follows that $2e_p\ge\ell_p$.
In all other cases, $e_p\ge\ell_p$ follows from \cite[Theorem A]{hh} for odd $p$ and from
\cite{brj} for $p=2$.
\end{proof}

We can now relate the exponent of a finite solvable group to its Fitting height.
\vskip1ex


\emph{Proof  of Theorem 1.} \
From Lemma \ref{towers_exist} 
it is enough to prove the claim, when $G$ is the product of subgroups 
forming a tower $\mathcal T$.
The exponent of $G$ is the product of  the exponents of its Sylow subgroups.
If $p_1, p_2, \ldots, p_k$ are the primes dividing $|G|$, where $p_i$ is Fermat for $i=r+1, \dots k$,  then we write
$p_i^{e_i}$ for the exponent of the Sylow $p_i$-subgroups of $G$. 
Propositions \ref{multiplicity} and \ref{lengh=exp} directly imply 
\newpage
\begin{eqnarray*}
h(G)&=& \sum_{i=1}^km_{p_i}(G)
~=~\sum_{i=1}^k\ell_{p_i}(G)\\
&\leq& \sum_{i=1}^re_{i}(G)+2\!\sum_{i=r+1}^ke_{i}(G)
~=~\Omega(\exp(G))+\Omega_1(\exp(G))\,.
\end{eqnarray*}
\phantom{.}\hfill$\Box$
\vskip1ex

\emph{Proof  of Theorem 2.} A group of odd order is solvable by Feit-Thompson theorem.  Again it is enough to prove the claim when $G=P_n\cdots P_1$ 
where $\{P_n,\dots , P_1\}$ is a tower. 
Since $2$ does not divide the order of $G$, 
it is a consequence of \cite[Theorem 2.1.1]{hh} and part (ii) of its Corollary, 
that the inequality $\ell_p\leq e_p$ holds for all prime divisors $p$ of $\left|G\right|$. 
We thus obtain $h(G)\leq\Omega(\exp(G))$ as in the proof of Theorem 1.
\phantom{.}\hfill$\Box$


\bibliographystyle{plain}
\bibliography{Fitting}

\end{document}